\newtheorem{problem}[theorem]{Problem}
\title{From Geometry to Topology: Inverse Theorems for Distributed Persistence} 
\author{Elchanan Solomon}{Duke University Mathematics Department, Durham, USA \and \url{http://www.elchanansolomon.com} }{elchanansolomon@gmail.com}{https://orcid.org/0000-0003-3461-4556}{AFOSR Grant FA9550-18-1-0266.}
\author{Alexander Wagner}{Duke University Mathematics Department, Durham, USA}{alexander.wagner@duke.edu}{https://orcid.org/0000-0002-5961-7852}{NSF CCF-1934964}
\author{Paul Bendich}{Duke University Mathematics Department and Geometric Data Analytics, Inc., Durham, USA}{}{}{AFOSR Grant FA9550-18-1-0266.}
\authorrunning{E. Solomon, A. Wagner, and P. Bendich} 
\keywords{Applied Topology, Persistent Homology, Inverse Problems, Subsampling} 
\begin{document}

\maketitle

\begin{abstract}
What is the “right” topological invariant of a large point cloud X?  Prior research has focused on estimating the full persistence diagram of X, a quantity that is very expensive to compute, unstable to outliers, and far from injective. We therefore propose that, in many cases, the collection of persistence diagrams of many small subsets of X is a better invariant. This invariant, which we call “distributed persistence,” is \emph{perfectly parallelizable}, more stable to outliers, and has a rich inverse theory. The map from the space of metric spaces (with the quasi-isometry distance) to the space of distributed persistence invariants (with the Hausdorff-Bottleneck distance) is globally bi-Lipschitz. This is a much stronger property than simply being injective, as it implies that the inverse image of a small neighborhood is a small neighborhood, and is to our knowledge the only result of its kind in the TDA literature. Moreover, the inverse Lipschitz constant depends on the size of the subsets taken, so that as the size of these subsets goes from small to large, the invariant interpolates between a purely geometric one and a topological one. Lastly, we note that our inverse results do not actually require considering all subsets of a fixed size (an enormous collection), but a relatively small collection satisfying simple covering properties. These theoretical results are complemented by synthetic experiments demonstrating the use of distributed persistence in practice.
\end{abstract}

\section{Introduction}
Morphometric techniques in data analysis can be loosely divided into the geometric and the topological. Geometric techniques, like landmarks, the Gromov-Hausdorff metric, optimal transport methods, PCA, MDS \cite{Kruskal:1964aa}, LLE \cite{Roweis2323}, and Isomap \cite{Tenenbaum2319}, are designed to capture some combination of global and local metric structure. Many geometric methods can be solved exactly or approximately via spectral methods, and hence are fast to implement using iterative and sketching algorithms. In contrast, topological techniques, like t-SNE \cite{JMLR:v9:vandermaaten08a}, UMAP \cite{McInnes2018}, Mapper \cite{SPBG:SPBG07:091-100}, and persistent homology, aim to capture large-scale connectivity structure in data. The growing popularity of t-SNE and UMAP as dimensionality reduction  methods suggests that many data sets are topologically, but not metrically, low-dimensional. In this paper, we introduce a new technique into topological data analysis (TDA) that:
\begin{enumerate}
	\item Provably interpolates between topological and geometric structure (Theorem \ref{thm:invstab}).
	\item Is \emph{perfectly parallelizable}.
	\item Is provably stable to perturbation of the data (Proposition \ref{prop:stability}).
	\item Is provably invertible, with globally stable inverse (Theorems \ref{thm:inv}, \ref{thm:invstab}, \ref{thm:sparseinv}, and Corollary \ref{cor:densecover}). Moreover, these inverse results do not require computing the full invariant, but a relatively small subset that can largely be chosen at random (Propositions \ref{prop:probcover} and \ref{prop:probeps}).
	\item Suggests new methods for a host of morphometric challenges, ranging from dimensionality reduction to feature extraction (Section \ref{sec:applications}).
\end{enumerate}

The theoretical guarantees provided here are, to our knowledge, unmatched by any other method in topological data analysis. In addition to these theoretical contributions, we demonstrate our theoretical results empirically on synthetic data sets.


\section{The Distributed Topology Problem}

Let $\lambda$ be an invariant of metric spaces $(X,d_X)$. For $k \in \mathbb{Z}$, we can define a distributed invariant $\Lambda_{k}$ that maps the metric space $X$ to the set of pairs $\{(S,\lambda(S)) \mid S \subset X, |S| = k \}$ if $k > 0$ and to $\emptyset$ otherwise. Put another way, $\Lambda_{k}(X)$ records the values of $\lambda$ on subsets of $X$ of a fixed size. 

When the computational complexity of $\lambda$ scales poorly in the size of $X$, it is much faster to compute $\lambda$ for many small subsets of $X$. $\Lambda_{k}$ takes this intuition to its limit by performing this calculation for \emph{all} subsets of a given size. Although it is unfeasible to actually compute $\Lambda_{k}$ in its entirety, sampling from $\Lambda_{k}$ is simple. This distinguishes $\Lambda_{k}$ from the original invariant $\lambda$, which, in general, cannot be ``sampled from'' or broken into smaller pieces. Moreover, $\Lambda_{k}$ may contain just as much, if not more, information than $\lambda$:
\begin{itemize}
	\item Let $\lambda$ send a finite point cloud $X$ in $\mathbb{R}^d$ to its Euclidean distance matrix. For all $k \geq 2$, $\Lambda_{k}$ contains the same information as $\lambda$.
	\item Let $\lambda$ send a finite point cloud $X$ in $\mathbb{R}^d$ to its diameter. For any $k \geq 2$, $\Lambda_{k}$ can be used to deduce $\lambda$. 
	\item Let $\lambda$ send a finite point cloud $X$ in $\mathbb{R}^d$ to its mean. For any $k \geq 1$, $\Lambda_{k}$ can be used to deduce $\lambda$. In fact, if $k < |X|$, $\Lambda_{k}$ determines $X$ up to rigid motion.
\end{itemize} 

Finally, $\Lambda_{k}$ is more robust than $\lambda$, as outliers in $X$ have no impact on $\Lambda_{k}$ for outlier-free subsets $S \subset X$. The theoretical goal of this paper is to address the following questions:
\begin{problem}
	If $\lambda$ is a topological invariant of metric spaces, how much information is contained in $\Lambda_{k}$ for various $k$? Does $\Lambda_{k}$ determine $\lambda$, or perhaps contain strictly more information?
	\label{prob:prob1}
\end{problem}

\begin{problem}
	How does the information contained in $\Lambda_{k}$ depend on the parameter $k$?
	\label{prob:prob2}
\end{problem}

\begin{problem}
	What information can be deduced from $\Lambda_{k}$ if we can only compute it for a relatively small collection of subsets?
	\label{prob:prob3}
\end{problem}

\subsection{Case Study: The Noisy Circle}
To illustrate the advantage of working with distributed invariants, we compare three data sets of $500$ points. The first is spaced regularly around a circle, the second sampled uniformly from the unit disc, and the third contains $450$ points on the circle and $50$ points sampled from the disc (we call this the \emph{noisy circle}), see Figure \ref{fig:noisycircle}. For each of these point clouds, we compute their full $1$-dimensional persistence diagrams, see Figure \ref{fig:noisycircle_diag}. In addition, for each point cloud, we sample $1000$ subsets of size $10$, compute the resulting $1000$ $1$-dimensional persistence diagrams, vectorize them as \emph{persistence images}\footnote{This is a technique for turning a persistence diagram into a function by placing a Gaussian kernel at each dot in the persistence diagram, with mean and variance varying by location, cf. \cite{adams2017persistence}.}, and average the results, see Figure \ref{fig:noisycircle_img}. The persistence diagram of the noisy circle is most similar to that of the disc (in Bottleneck distance), demonstrating that ordinary persistence does not see the circle around which most of the data points are clustered. The distributed persistence, however, tells a different story. The distribution for the noisy circle interpolates between the distributions of the other two spaces, but is substantially closer to that of the circle than the disc.

\begin{figure}[htb!]
	\centering
	\includegraphics[scale=0.4]{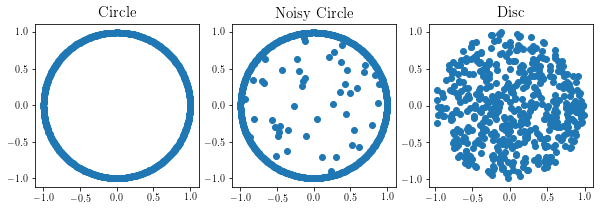}
	\caption{Three point clouds: the circle, the noisy circle, and the disc.}
	\label{fig:noisycircle}
\end{figure}

\begin{figure}[htb!]
	\centering
	\includegraphics[scale=0.6]{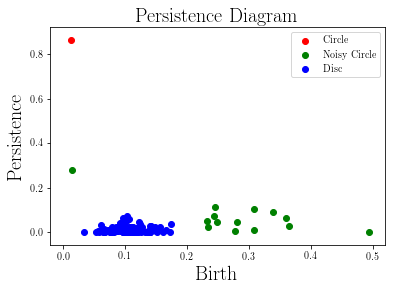}
	\caption{The persistence diagrams of our point clouds, plotted in birth-persistence coordinates.}
	\label{fig:noisycircle_diag}
\end{figure}

\begin{figure}[htb!]
	\centering	\includegraphics[scale=0.6]{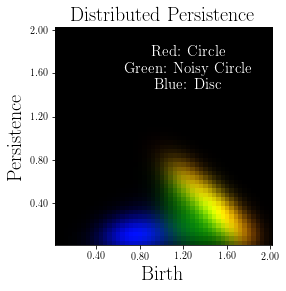}
	\caption{Averaged distributed persistence images of our three spaces. The dominant orange/yellow region is the overlay of the circle (red) distribution and the noisy circle (green) distribution.}
	\label{fig:noisycircle_img}
\end{figure}

\section{Prior Work on Distributed Topology}

In \cite{pmlr-v37-chazal15}, Chazal et al. propose the following framework. Given a metric measure space $(\mathbb{X}, \rho, \mu)$, sample $m$ points and compute the persistence landscape of the associated Vietoris-Rips filtration. This procedure produces a random persistence landscape, $\lambda$, whose distribution is denoted $\Psi_{\mu}^m$. Repeating this procedure $n$ times and averaging produces the empirical average landscape, an unbiased estimator of the average landscape $E_{\Psi_{\mu}^m}[\lambda]$. This approach is similar to the distributed topological invariants considered in this paper, except we consider a collection of topological invariants as a labeled set rather than taking their sum. Though Bubenik \cite{10.1007/978-3-030-43408-3_4} gives conditions in Theorem $5.11$ under which a collection of persistence diagrams may be reconstructed from the average of their corresponding persistence landscapes, such an inverse exists only generically, and is highly unstable.


The main theorem of \cite{pmlr-v37-chazal15} is that the average landscape is stable with respect to the underlying measure. Specifically, if $\mu$ and $\nu$ are two probability measures on the same metric space $(\mathbb{X}, \rho)$, the sup norm between induced average landscapes is bounded by $m^{1/p}W_{\rho, p}(\mu, \nu)$ for any $p \geq 1$. Similar results were obtained in \cite{Blumberg:2014aa} for distributions of persistence diagrams of subsamples. In particular, Blumberg et al. showed that the distribution of barcodes with the Prokhorov metric is stable with respect to the associated compact metric measure space in the Gromov-Prokhorov metric. Both results are analogous to the stability of the distributed topological invariants given in Proposition \ref{prop:stability}. However, working with labeled collections of distributed topological invariants, we are also able to provide inverse stability results, such as our main Theorem \ref{thm:invstab}, which states that changes in the metric structure are bounded with respect to changes in the distributed topological invariants.  

In \cite{memoli2012some}, M\'{e}moli developed the study of \emph{curvature sets}, an invariant introduced by Gromov that consists of computing the distance matrices of every subset of a fixed size in an ambient metric space. Shortly after this paper appeared, G\'{o}mez and M\'{e}moli \cite{gomez2021curvature} released a manuscript studying the collection of persistence diagrams of subsets of bounded cardinality in an ambient metric space. This construction is similar to ours, with the following key differences: firstly, we take subsets of a fixed cardinality $k$, or else cardinalities in a small neighborhood of $k$, whereas G\'{o}mez and M\'{e}moli consider all subsets of cardinality at most $k$, and, secondly, we have different conventions for which homological degrees to compute. More importantly, that paper differs from ours in the nature of the results: G\'{o}mez and M\'{e}moli are focused on computing this invariant for simple spaces, and giving examples of when their invariant characterizes the homotopy type of the underlying space. This paper is focused on inverse results of a geometric flavor, trying to understand how distributed topological invariants characterize the quasi-isometry type of the underlying space.

In \cite{Bubenik_2020}, Bubenik et al. consider unit disks, denoted $D_K$, of surfaces of constant Gaussian curvature $K$ with $K \in [-2, 2]$. Since these spaces are all contractible, their reduced singular homology is trivial and global homology cannot distinguish them. However, the authors prove that the maximum \v{C}ech persistence for three points sampled from $D_K$ determines $K$. The authors also successfully apply the same empirical framework of average persistence landscapes from \cite{pmlr-v37-chazal15} to experimentally determine the curvature of $D_K$ for various $K$. The authors in \cite{PhysRevE.93.052138} used average persistence landscapes to provide experimental verification of a known phase transition. Finally, the authors in \cite{10.1007/978-3-030-42266-0_14} use average persistence landscapes to achieve improved results, compared to standard machine learning algorithms, in disease phenotype prediction based on subject gene expressions.

\section{Background}
The content of this paper assumes familiarity with the concepts and tools of persistent homology. Interested readers can consult the articles of Carlsson \cite{carlsson2009topology} and Ghrist \cite{ghrist2008barcodes} and the textbooks of 
Edelsbrunner and Harer \cite{edelsbrunner2010computational} and Oudot \cite{oudot2015persistence}. We include the following primer for readers interested in a high-level, non-technical summary.

Persistent homology records the way topology evolves in a parametrized sequence of spaces. To apply persistent homology to a metric space, a pre-processing step is needed that converts the metric space into such a sequence. The two classical ways of doing this are called the Rips and \v{C}ech filtrations, respectively; the former is much easier to compute than the latter, but contains less geometric information. Both consist of inserting simplices into the metric space at a parameter value equal to the proximity of the associated vertex points. As the sequence of spaces evolves, the addition of certain edges or higher-dimensional simplices changes the homological type of the space -- these simplices are called critical. Persistent homology records the parameter values at which critical simplices appear, notes the dimension in which the homology changes, and pairs critical values by matching the critical value at which a new homological feature appears to the critical value at which it disappears. This information is organized into a structure called a persistence diagram, and there are a number of metrics with which persistence diagrams can be compared. 

If one forgets about the pairing and retains only the dimension information of the critical values, the resulting invariant is called a Betti curve. Betti curves are simpler to compute and work with than persistence diagrams, but are less informative and harder to compare. Finally, if one also drops the dimension information by taking the alternating sum of the Betti curves, one gets an Euler curve. Euler curves are even less discriminative than Betti curves, but enjoy the special symmetry properties of the Euler characteristic. These symmetries will be put to good use in this paper.

Persistence theory guarantees that a small modification to the parametrization of a sequence of spaces implies only small changes in its persistence diagram. To be precise, if the appearance time of any given simplex is not delayed or advanced by more than $\epsilon$, the persistence diagram as a whole is not distorted by more than $\epsilon$ in the appropriate metric (called the \emph{Bottleneck distance}). Throughout this paper we will use the trick of modifying filtrations by rounding their critical values to a fixed, discrete set.

As a rule, the map sending a metric space to its persistence diagram is not injective, as many different point clouds share the same persistence diagram \cite{curry2018fiber,leygonie2021fiber,leygonie2021algorithmic}. Moreover, the set of metric spaces sharing a common persistence diagram need not be bounded, so that arbitrarily distinct spaces might have the same persistence. There are a number of constructions in the TDA literature that attempt to correct this lack of injectivity by constructing more sophisticated invariants; these are often called \emph{topological transforms}. Examples include the Persistent Homology Transform \cite{turner2014persistent,ghrist2018persistent,curry2018many,kirveslahti2021representing} and Intrinsic Persistent Homology Transform \cite{oudot2017barcode}; consult \cite{oudot2020inverse} for a survey of inverse results in persistence. These methods are largely infeasible to approximate, unstable, and provide no global Lipschitz bounds on their inverse, so two wildly different spaces may produce arbitrarily similar (though not exactly identical) transforms. The distributed topology invariant studied in this paper is injective, easy to sample from, stable, and with Lipschitz inverse. 

We conclude with an analysis of the computational complexity of persistence calculations. Persistence calculations are $O(N^{\omega})$, where $N$ is the number of simplices in the complex and $\omega$ is the matrix multiplication constant \cite{milosavljevic2011zigzag}. For a metric space $X$, the number of $(d+1)$-dimensional simplices in the Rips complex is ${|X| \choose d+2}$, which are needed for computing persistence in degree $d$. Thus the computational complexity is $O({|X| \choose d+2}^{\omega})$, which is huge even for small values of $d$. Computing persistence of $M$ $k$-element subsets is $O(M{k \choose d+2}^{\omega})$, which is orders of magnitude smaller for the values of $M$ used in the experiments of Section \ref{sec:experiments}.


\section{Theoretical Results}

In what follows, we let $\lambda$ be any of the following four topological invariants: (1) Rips Persistence (RP), (2) Rips Euler Curve (RE), (3) \v{C}ech Persistence (CP), and (4) \v{C}ech Euler Curve (CE). To be precise, RP and CP consist of persistence diagrams for every homological degree. When working with either of these invariants, the Bottleneck or Wasserstein distance is the maximum of the Bottleneck or Wasserstein distances over all degrees. Our decision to focus on these four invariants is motivated by a desire to keep the following analysis as simple and concrete as possible, and many of the arguments and theorems below carry through, with minor modification, for other invariants. Indeed, a very similar analysis works for functional persistence, where the sampling consists of picking $k$ points at random and computing functional persistence on their induced subcomplex; details of this proof will appear in future work.

\subsection{Stability}

A result of the following form is standard in the TDA literature, and demonstrates the ease of producing stable invariants using persistent homology.

\begin{definition}
	\label{def:qi}
	Let $(X, d_X)$ and $(Y, d_Y)$ be metric spaces. A map $\phi:(X,d_X) \to (Y,d_Y)$ is an $\epsilon$-quasi-isometry if $|d_X(x_1,x_2) - d_Y(\phi(x_1),\phi(x_2))| \leq \epsilon$ for all $x_1, x_2 \in X$. The quasi-isometry distance between $X$ and $Y$ is the smallest $\epsilon$ for which such a map exists.
\end{definition}

\begin{proposition}
	\label{prop:stability}
	Let $\phi:(X,d_X) \to (Y,d_Y)$ be an $\epsilon$-quasi-isometry of metric spaces. Then for all subsets $S \subseteq X$, and $\lambda$ either RP or CP, $d_{B}(\lambda(S),\lambda(\phi(S))) \leq \epsilon$, where $d_{B}$ is the Bottleneck distance on persistence diagrams.
\end{proposition}
\begin{proof}
	This follows immediately from the Gromov-Hausdorff stability theorem for persistence diagrams of metric spaces \cite{chazal2016structure,cohen2007stability}.
\end{proof}

\subsection{Injectivity}

In this section, we show how distributed persistence can be used to reconstruct the isometry type of a metric space. This provides an answer to Problem \ref{prob:prob1}. To help motivate this result, we consider the simple cases of $k=2$ and $k=3$.

\begin{lemma}
	\label{lem:2dist}
	For all of our invariants, $\Lambda_{2}$ determines the isometry type of $X$, and hence also $\lambda(X)$.
\end{lemma}
\begin{proof}
	For each invariant, the distance between two points $x,y \in X$ can be read off of $\lambda({x,y})$, so $\Lambda_{2}$ records all the pairwise distances between points in $X$, and hence the metric $d_{X}$. The metric then determines the Rips or \v{C}ech complex of $X$ as an abstract metric space. When considering the \v{C}ech complex of a point cloud $X$ in Euclidean space, the metric determines the embedding of $X$ up to Euclidean isometry (see \cite{schoenberg1935remarks}), and hence the \v{C}ech filtration.
\end{proof}


Setting $k=3$ is sufficient to break the implication of an isometry.

\begin{lemma}
	$\Lambda_{3}$ does not determine the isometry type of $X$.
\end{lemma}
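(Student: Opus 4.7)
The plan is to construct two non-isometric three-point sets $X$ and $Y$ in $\mathbb{R}^2$ whose persistence diagrams (and Euler curves) agree for all four choices of $\lambda$. Since $|X|=|Y|=3$, the invariant $\lambda_3$ consists of a single labeled diagram in each case, so producing this one coincidence suffices.

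The key observation is that for a triangle with side lengths $a\le b\le c$ satisfying the obtuseness condition $c^{2}>a^{2}+b^{2}$, the critical value at which the $2$-simplex enters coincides with that of the longest edge in both filtrations: in the Rips case both occur at $r=c$, and in the \v{C}ech case both occur at $r=c/2$. The Rips assertion is immediate from the definition, since a simplex appears at the maximum of its pairwise distances. For \v{C}ech, the $2$-simplex enters at the radius of the smallest enclosing disk of the vertex set, and for an obtuse triangle this disk is the one with the longest side as diameter. In either filtration, the $H_{1}$ class born when the third edge appears is killed immediately by the $2$-simplex, producing a zero-length (hence invisible) interval. Thus the persistence diagram depends only on $a$ and $b$, and since the Euler curve is the alternating sum of Betti numbers of the filtered complexes, it too is determined by $a$ and $b$ alone.

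Now take $X$ and $Y$ to be the planar isoceles triangles with side lengths $(1,1,c_X)$ and $(1,1,c_Y)$ for any choice of parameters $\sqrt{2}<c_X<c_Y<2$. The inequalities $c_X,c_Y>\sqrt{2}$ ensure both triangles are obtuse at the vertex opposite the long side, and $c_X,c_Y<2$ guarantees the triangle inequality is satisfied. By the preceding paragraph, $\lambda(X)=\lambda(Y)$ for each $\lambda\in\{\mathrm{RP},\mathrm{RE},\mathrm{CP},\mathrm{CE}\}$, and since the underlying indexing set is the same, the labeled collections satisfy $\lambda_{3}(X)=\lambda_{3}(Y)$. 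On the other hand, the multisets of pairwise distances differ between $X$ and $Y$, so the two configurations are not isometric as metric spaces.

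The only step requiring any real care is identifying the \v{C}ech critical value of the $2$-simplex in the obtuse case, which reduces to the standard fact that the minimum enclosing ball of an obtuse triangle has the longest side as diameter (equivalently, the opposite vertex lies inside that disk). The Rips analogue and the passage from persistence diagrams to Euler curves are direct.
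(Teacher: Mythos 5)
Your proof is correct and follows essentially the same route as the paper, which also uses a family of obtuse triangles whose isometry type varies while the filtration topology does not. You fill in the detail the paper leaves implicit—namely that obtuseness forces the $2$-simplex to enter at the same critical value as the longest edge in both the Rips and \v{C}ech filtrations, so every invariant depends only on the two shorter side lengths.
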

\begin{proof}
	A simple counterexample suffices. Let $X$ consist of the vertices of an obtuse triangle with angle $\theta > \pi/2$. Varying the angle $\theta$ in $(\pi/2, \pi)$ alters the isometry type of $X$, but leaves its persistent homology unchanged.
\end{proof}

To obtain stronger results, we introduce the following two generalizations, one to the notion of distributivity, and the other to the invariants $\lambda$.

\begin{definition}
	Let $\mathbf{k} = \{k_1,k_2, \cdots, k_r\}$ be a set of positive integers . We write $\Lambda_{\mathbf{k}}$ for the union $\bigcup_{i=1}^{r}\Lambda_{k_r}$.
\end{definition}

\begin{definition}
	For any of our four invariants $\lambda$, let $\lambda^{m}$ be the invariant restricted to the $m$-skeleton of the Rips or \v{C}ech complex, and define $\Lambda_{\mathbf{k}}^{m}$ analogously.
\end{definition}

Setting $m=0$ provides information only on the cardinality of $X$. The $1$-skeleton contains both geometric and topological information, and its persistence is fast to compute. As $m$ increases, computational complexity goes up, and the resulting invariants record higher-dimensional topological information. The following results demonstrate how knowing sufficiently many Euler characteristic invariants allows one to determine new ones.

\begin{definition}
For a set $X$, let $K(X)$ be the full simplicial complex on $X$, which is abstractly equal to the power set of $X$. A function $f: K(X) \to \mathbb{R}$ on the simplices of $K$ is called \emph{monotone} if $f(\sigma) \leq f(\tau)$ when $\sigma$ is a face of $\tau$. For a subcomplex $T \subseteq K(X)$, we write $T(r)$ to denote the $r$-sublevel set of $f$ on $T$.
\end{definition}

\begin{lemma}
\label{lem:ietrick} Let $R, T_1, \cdots, T_n$ be subcomplexes of $K(X)$, the full complex on $X$. Writing $S^{m}$ to denote the $m$-skeleton of a subcomplex $S$, suppose that $R^{m} = \bigcup_{i=1}^{n} T_i^{m}$. For $f: K(X) \to \mathbb{R}$ a monotone function on $K$, we have:
\begin{align*}
	\chi(R^{m}(r)) &= \chi\left(\bigcup_{i} T_i^{m}(r)\right) \\ 
	&= \sum_{i} \chi(T_i^{m}(r)) \\
	&- \sum_{i < j}\chi(T_i^{m}(r) \cap T_j^{m}(r)) \\
	&+ \sum_{i < j < k}\chi(T_i^{m}(r) \cap T_j^{m}(r) \cap T_k^{m}(r)) \,\,\,\,\, \cdots \\
	&+ (-1)^{n-1} \chi(T_1^{m}(r) \cap \dots \cap T_{n}^{m}(r)).
\end{align*}	
\end{lemma}
\begin{proof}
This follows from the inclusion-exclusion property of the Euler characteristic.
\end{proof}

\begin{lemma}
	Let $\lambda$ be RE or CE. For any metric space $X$ and $k \geq m+2$, let $\mathbf{k} = \{k,k-1,\cdots, k-m-1\}$. Then $\Lambda_{\mathbf{k}}^{m}$ determines $\Lambda_{k-m-2}^{m}$. 
	\label{lem:euler}
\end{lemma}
\begin{proof}
	Let $Y \subset X$ be a subset of size $(k-m-2)$. Let $\{x_1, \cdots, x_{m+2}\}$ be points in $X \setminus Y$, and set $W = Y \cup \{x_1, \cdots, x_{m+2}\}$ and $W_i = W \setminus \{x_i\}$. Let $f: K(X) \to \mathbb{R}$ be the function giving rise to the Rips or \v{C}ech filtration, and let $R = K(W)$ and $T_i = K(W_i)$. By construction, $R^{m} = \bigcup_{i=1}^{n} T_i^{m}$, since every $(m+1)$-element subset of $W$ lies in some $W_i$, so we may apply Lemma \ref{lem:ietrick}. This gives a formula for the Euler characteristic of $R^{m}(r)$ in terms of the Euler characteristics of the $T^{m}_{i}(r)$ and their intersections. By hypothesis, we know the Euler characteristics of every term in this equation other than the final term,  $\chi(T_1^{m}(r) \cap \dots \cap T_{n}^{m}(r)) = \chi(K^{m}(Y)(r))$, since every other term involves the Euler characteristic of a set with cardinality in $\mathbf{k}$. This means that we can solve for $\chi(K^{m}(Y)(r))$ in terms of known quantities, and hence deduce the Euler curve for the Rips or \v{C}ech filtration on $Y$. See Figure \ref{fig:incexc} for a concrete example.
\end{proof}

\begin{figure}[htb!]
	\centering
	\includegraphics{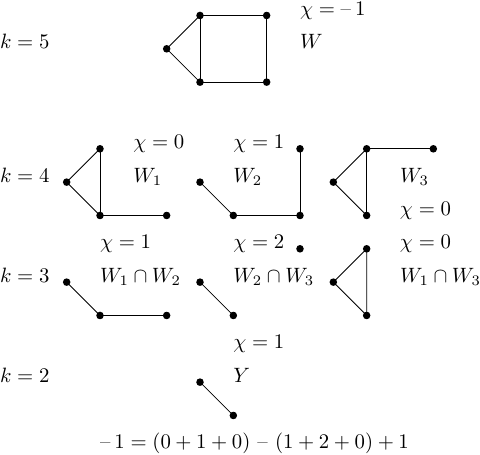}
	\caption{Our goal is to deduce the Euler Characteristic (at a fixed scale $r$) of $Y$, a $1$-simplex consisting of $k=2$ points. This can be derived from the Euler Characteristics of the other subcomplexes in the diagram above.}
	\label{fig:incexc}
\end{figure}

\begin{corollary}
	\label{cor:eulertrick}
	Let $\lambda$ be RE or CE. For any metric space $X$ and $k \geq m+2$, let $\mathbf{k} = \{k,k-1,\cdots, k-m-1\}$. Then $\Lambda_{\mathbf{k}}^{m}$ determines $\Lambda_{2}^{m}$. 
\end{corollary}

\begin{proof}
	Lemma \ref{lem:euler} shows that $\{\Lambda_{k}^{m}, \Lambda_{k-1}^{m}, \cdots \Lambda_{k-m-1}^{m}\}$ determines $\Lambda_{k-m-2}^{m}$. By the same logic, $\{\Lambda_{k-1}^{m}, \Lambda_{k-2}^{m}, \cdots \Lambda_{k-m-2}^{m}\}$ determines $\Lambda_{k-m-3}^{m}$. Repeating this argument, we can deduce $\Lambda_{2}^{m}$.  
\end{proof}

Leveraging Lemma \ref{lem:euler}, we prove that all of our persistence invariants determine the isometry type of $X$.

\begin{theorem}
	For any of the four invariants $\lambda$ and $k \geq m+2 > 2$, let $\mathbf{k} = \{k,k-1,\cdots, k-m-1\}$. Then $\Lambda_{\mathbf{k}}^{m}$ determines the isometry type of $X$.
	\label{thm:inv}
\end{theorem}
\begin{proof}
	When $m \geq 1$, the $m$-skeleton contains all edges in $X$, so Lemma \ref{lem:2dist} applies. If the set $\{k,k-1, \cdots, k-m-1\}$ contains $2$, this follows from Lemma \ref{lem:2dist}. Otherwise, let us assume $\lambda$ is either RE or CE, as RP or CP contain more information than their Euler characteristic counterparts. By Corollary \ref{cor:eulertrick}, we can determine $\Lambda_2^m$ and then apply Lemma \ref{lem:2dist}.
\end{proof}

\begin{remark}
	Note that $m=1$ suffices to apply the prior theorem. As $m$ gets larger, more topological information is needed to determine the isomety type of the underlying space.
\end{remark}


%

\subsection{Inverse Stability}

We now consider what happens if two metric spaces have distributed invariants which are similar but not identical. We show that this implies a quasi-isometry between $X$ and $Y$, with constant depending quadratically on the subset size parameter $k$. This provides a precise answer to Problem \ref{prob:prob2} on how the distributed invariant interpolates between geometry and topology.

The key insight in the proof of this result is that there is always a way to modify the Rips or \v{C}ech filtrations on $X$ and $Y$ to force their distributed invariants to coincide exactly. Taken together with the telescoping trick of Corollary \ref{cor:eulertrick}, this modified invariant must agree for all subsets of size two. Persistence stability allows us to assert that the modified invariant and the original persistence invariant are a bounded distance apart, so equality of the modified invariant gives near-equality of the Rips or \v{C}ech persistences on subsets of size two, which is nothing more than pairwise distance data.

The proposed modification to our filtration consists of rounding it to a discrete set of values. The following technical lemma shows how to pick a rounding set $R$ that aligns two sets of real values without moving any value more than a bounded amount. The proof of this lemma can be found in Appendix \ref{sec:roundinglemmaproof}.
\begin{lemma}[Rounding Lemma]
	Let $P = \{p_1 \leq p_2 \leq \cdots p_N\}$ and $Q = \{q_1, q_2 \cdots, q_N\}$ be two multisets of real numbers. Define $d_i = |p_i - q_i|$, let $\epsilon = \max d_i$ and $\delta = \sum_{i=1}^{n} d_i$. Then there  exists a subset $R \subset \mathbb{R}$ and a map $\pi: P \cup Q \to R$ sending a real value $x$ to the unique closest element in $R$ (rounding up at midpoints), with:
	\begin{enumerate}
		\item $\pi(p_i) = \pi(q_i)$ for all $i$.
		\item $|\pi(x) - x| \leq 3\epsilon + 4\delta$.
	\end{enumerate}
	In particular, since $\epsilon \leq \delta$, we can replace (2) with (2*) $|\pi(x) - x| \leq 7 \delta$.
	\label{lem:roundinglemma}
\end{lemma}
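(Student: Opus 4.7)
I would begin with a geometric reinterpretation: any $R \subset \mathbb{R}$ partitions $\mathbb{R}$ into Voronoi cells bounded by the midpoints of consecutive elements of $R$, and $\pi$ just sends each point to the representative of its cell. Under this view, requirement (1) is equivalent to asking that no midpoint of $R$ lie in the half-open ``pair interval'' $I_i := (\min(p_i,q_i), \max(p_i,q_i)]$ (the half-openness coming from the ``round up at midpoints'' convention), so that $p_i$ and $q_i$ land in a common cell; requirement (2) then becomes a bound on the maximum cell half-width.

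I would try the simplest possible construction: a shifted uniform grid $R_s := s + T\mathbb{Z}$ with spacing $T > 0$ and shift $s \in [0,T)$. Every cell then has width exactly $T$, so $|\pi(x)-x| \leq T/2$ automatically, and the task reduces to choosing $s$ so that no grid midpoint $s + (k+\tfrac{1}{2})T$ lies in any $I_i$. Working modulo $T$, the set of shifts $s$ that fail for the $i$-th pair has Lebesgue measure $\min(d_i,T)$, so provided $T > \epsilon$ the total bad measure in $[0,T)$ is at most $\sum_i d_i = \delta$, and a good $s$ exists whenever $T > \delta$. Since $\epsilon \leq \delta$, the binding condition collapses to $T > \delta$.

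To land on the claimed constant, I would set $T = 6\epsilon + 8\delta$, which comfortably exceeds $\delta$ whenever $\delta > 0$ and yields $T/2 = 3\epsilon + 4\delta$ as required. The degenerate case $\delta = 0$ forces all pairs to coincide and is handled by taking $R = P \cup Q$ directly.

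The main subtlety I anticipate is bookkeeping around the midpoint-rounding convention: one must check that the bad set of shifts for a given pair has measure exactly $d_i$ (and not an off-by-measure-zero variant coming from the choice of open, closed, or half-open boundary), so that the positive-measure complement yields an honest good shift rather than merely an almost-good one. Once this is settled, the proof reduces to a routine measure-theoretic pigeonhole. A more combinatorial alternative would place one representative in each connected component of $\bigcup_i [a_i,b_i]$ (whose total length is $\leq \delta$) and sprinkle evenly-spaced extras in long good gaps, but verifying the diameter bound under that scheme requires more case analysis when bad components lie close to each other, so I would pursue the grid approach first.
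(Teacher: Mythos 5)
Your proof is correct, and it takes a genuinely different route from the paper. The paper's argument is a greedy recursive sweep: it walks through the sorted $p_i$, adds a candidate grid point only when $p_n$ has moved at least $2\epsilon+4\delta$ past the last one, and then nudges that candidate rightward by $2d_i$ for each earlier pair it would otherwise split, with a case analysis verifying that skipped points still round to the last-added element; the resulting $R$ is non-uniform and must be densified separately in Corollary \ref{cor:rounding}. Your shifted-grid averaging argument replaces all of this with a single union bound: the bad shifts for pair $i$ have measure exactly $\min(d_i,T)=d_i$ once $T>\epsilon$, so any $T>\delta$ admits a good shift, and the midpoint-convention bookkeeping you flag is exactly the only thing to check (with the round-up convention the cell of a grid point $r$ is $[r-T/2,\,r+T/2)$, and a pair is split iff a cell boundary lands in $(\min(p_i,q_i),\max(p_i,q_i)]$, whose image mod $T$ has measure $d_i$). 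Your approach buys three things: it never uses the sortedness of $P$, which the paper's recursion leans on; the output $R$ is already a $T$-net, so the paper's Corollary \ref{cor:rounding} (extension to a $14\delta$-dense set) is immediate rather than a separate step; and it proves a strictly stronger bound, since taking $T=(1+\eta)\delta$ for small $\eta>0$ gives $|\pi(x)-x|\leq(1+\eta)\delta/2$ rather than $3\epsilon+4\delta$ --- your choice $T=6\epsilon+8\delta$ deliberately throws this away to match the stated constant, and you could instead note that the improvement would propagate to smaller Lipschitz constants in Theorem \ref{thm:invstab}. The one cosmetic mismatch is that the lemma as stated produces a subset $R$ together with the rounding map, and an infinite uniform grid is a perfectly valid such subset here, so nothing is lost.
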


This lemma is central to the proof of the central result of this section, Theorem \ref{thm:invstab}, the details of which can be found in Appendix \ref{sec:majorproof}. The preceding definitions clarify the statement of the theorem:

\begin{definition}
	Let $m < k$ be natural numbers. We define the following partial sum of binomial coefficients:
	\[S(k,m) = {k \choose 2} + {k \choose 3} +\cdots +{k \choose m+1}.\]
\end{definition}

\begin{definition}
	Let $(K,f)$ be a filtered simplicial complex, i.e. a simplicial complex $K$ with a monotone function $f:K \to \mathbb{R}$ encoding the appearance times of simplices. Given a subset $R \subset \mathbb{R}$, \emph{rounding this filtration to $R$} consists of post-composing $f$ with the map sending every element of $\mathbb{R}$ to its nearest element in $R$ (rounding up at midpoints).
\end{definition}

\begin{remark} The appearance time of simplices in an $R$-rounded filtration occur only at values contained in $R$. The effect of this rounding on the resulting persistence diagrams is to round the birth and death times of its constituent dots; no new points are introduced.
\end{remark}


%

\begin{theorem}
	Let $\lambda$ be either RP or CP, and take $k > m > 0$. Let $Z$ and $Y$ be metric spaces, $\phi:Z \to Y$ a map of sets, and $X \subseteq Z$ a subspace such that $\phi_{X}:X \to Y$ is a surjection. Let $\Gamma \subset P(Z)$ be a collection of subsets of cardinality $k$ through $k-m-1$ satisfying the following two properties:
	\begin{itemize}
		\item ($X$-Covering property) For every pair of points $\{x_1,x_2\}$ in $X$ there is a subset $S \in \Gamma$ such that $|S| = k$ and $\{x_1,x_2\} \subset S$.
		\item (Closure property) If $S \in \Gamma$ has $|S| = k$, and $S' \subset S$ has $|S'|\geq k-m-1$, then $S' \in \Gamma$.
	\end{itemize} 
	Suppose that $d_{B}(\lambda^{m}(S),\lambda^{m}(\phi(S))) \leq \epsilon$ for all $S \in \Gamma$. If $\lambda$ is RP, $\phi_{X}$ is a $112k^2\epsilon$ quasi-isometry, and if $\lambda$ is CP, $\phi_{X}$ is a $224S(k,m)\epsilon$ quasi-isometry.
	\label{thm:invstab}
\end{theorem}

\begin{remark}
The collection $\Gamma$ of \emph{all} subsets of size $k$ through $k-m-1$ enjoys the covering and closure properties above. However, it is easy to find much smaller collections satisfying the conditions of Theorem \ref{thm:invstab}, see Section \ref{sec:prob}.
\end{remark}

\begin{remark}
	Theorem \ref{thm:invstab} answers Problem \ref{prob:prob2} by showing that smaller values of $k$ give more control of quasi-isometry type than larger values. This justifies our claim that distributed topology interpolates between local geometry and global topology.
\end{remark}


One can shrink the collection $\Gamma$ further by asking only that its elements cover sufficiently close approximations for $X$ and $Y$; in this case, the resulting bound is not in the quasi-isometry distance but in the Gromov-Hausdorff distance.

\begin{corollary}
	Let $\lambda$ be either RP or CP, and take $k > m > 0$. Let $\phi: Z \to Y$ be a map of metric spaces, $X \subset Z$ a subspace, and $X' \subset Z$ another, potentially much smaller, subspace with $d_{GH}(X,X')<\delta$. Suppose also that $d_{GH}(\phi(X'),Y) < \delta$. Finally, let $\Gamma \subset P(Z)$ be a collection of subsets of cardinality $k$ through $k-m-1$ satisfying the covering and closure properties for $X'$, and such that $d_{B}(\lambda^{m}(S),\lambda^{m}(\phi(S))) \leq \epsilon$ for all $S \in \Gamma$. If $\lambda$ is RP, then $d_{GH}(X,Y) \leq 112 k^2\epsilon + 2\delta$, and if $\lambda$ is CP, then $d_{GH}(X,Y) \leq 224S(k,m)  k^{m+1}\epsilon + 2\delta$.
	\label{cor:densecover}
\end{corollary}
\begin{proof}
	Theorem \ref{thm:invstab} implies that $\phi$ is a quasi-isometry from $X'$ to $\phi(X')$. We can turn this into a a Gromov-Hausdorff matching between $X$ and $Y$ using the facts that $d_{GH}(X,X') < \delta$ and $d_{GH}(\phi(X'),Y) < \delta$, and two applications of the triangle inequality increase the bound by $2\delta$.
\end{proof}

\begin{corollary}
	If $X \subset \mathbb{R}^{d_1}$ and $Y \subset \mathbb{R}^{d_2}$, then the quasi-isometry bound for \v{C}ech persistence in Theorem \ref{thm:invstab} can be replaced with:
	\[112 k^2\left(\epsilon + \sqrt{\frac{2d_1}{d_1 + 1}} + \sqrt{\frac{2d_2}{d_2 + 1}} \right)\]
	Note that the added terms sum at most to $2\sqrt{2}$, so that this bound is better than the bound given in Theorem \ref{thm:invstab} for large values of $\epsilon$, but does fail to go to $0$ as $\epsilon \to 0$.
	\label{cor:cechrips} 
\end{corollary}
\begin{proof}
	The Rips and \v{C}ech persistence of point clouds in $\mathbb{R}^d$ are always within $\sqrt{\frac{2d}{d+1}}$ of one another in the bottleneck distance, cf. Theorem 2.5 in \cite{de2007coverage}. The result then follows by replacing \v{C}ech persistence with Rips persistence and using the triangle inequality.
\end{proof}

\subsection{Topology + Sparse Geometry}
Our goal now is improve the results of the prior section by giving quasi-isometry bounds that scale linearly in $k$, rather than quadratically. This can be accomplished by using an inclusion-exclusion argument on the $1$-skeleton persistence of $X$ that uses only subsets of size $k$ and $k-1$, and does not need subsets of size $k-2$. Namely, given a subset $Y \subset X$ with $|Y| = (k-2)$, we take $Y = W_1 \cap W_2$ for $|W_1| = |W_2| = (k-1)$ and $W = (W_1 \cup W_2)$ with $|W| = k$, as shown in Figure \ref{fig:sparseIE}, and attempt to deduce the Euler characteristic of $Y$ from those of $W_1,W_2$, and $W$. However, the union of the $1$-skeleton complexes on $W_1$ and $W_2$ is not the $1$-skeleton complex on $W$, owing to the fact that $W$ contains an extra edge connecting the pair of vertices in $W \setminus Y$. Indeed, this is why we chose to cover $W$ with \emph{three} subsets of cardinality $k-1$ in Lemma \ref{lem:ietrick}.

\begin{figure}[htb!]
	\centering
	\includegraphics{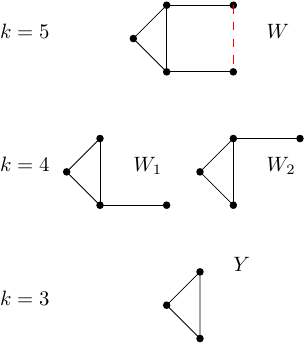}
	\caption{Our goal is to deduce the Euler Characteristic (at a fixed scale $r$) of $Y$, a subcomplex of size $k=3$, using subcomplexes of size $k=4$ and $k=5$. However, the inclusion-exclusion argument fails because the union of the complexes of $W_1$ and $W_2$ is not the complex on $W = W_1 \cup W_2$, and the missing edge is shown in red.}
	\label{fig:sparseIE}
\end{figure}

The effect of this extra edge on persistence is quite subtle, but its effect on the Euler curve is trivial, as it amounts to subtracting a step function supported on $[r, \infty)$, where $r$ is the appearance time of the extra edge in the complex. If we knew $r$, we could correct the deficit in our inclusion-exclusion argument. Note that the we have the freedom to choose $W_1$ and $W_2$ as we like, so to make this argument work we need only know the length of a single edge in $X$ that does not intersect $Y$. A very small collection of edge lengths suffice to patch up the inclusion-exclusion argument for all subsets of $X$ of size at most $k$. The following theorem improves on the bounds in Theorem \ref{thm:invstab} by assuming that $\phi$ is already known to be a quasi-isometry on a sparse subset $L \subset Z$. The proof can be found in Appendix \ref{sec:sparseproof}.

\begin{theorem}
	Let $\lambda, \phi,Z,Y$, and $X$ be as in the statement of Theorem \ref{thm:invstab}, and let $k>m=1$. Let $L \subset Z$ be a subset satisfying the following geometric condition:
		\[\sum_{(x_i,x_j) \in L \times L} |\|x_i - x_j\| - \|\phi(x_i) - \phi(x_j)\|| \leq \epsilon_2.\]
	Let $\Gamma \subset P(Z)$ be a collection of subsets of cardinality in $\{k,k-1\}$ satisfying the following two properties:
	\begin{itemize}
		\item ($(X,L)$-Covering property) For every pair of points $\{x_1,x_2\}$ in $X$, not both in $L$, there is a subset $S \in \Gamma$ such that $|S| = k$, $\{x_1,x_2\} \subset S$, and $S \setminus \{x_1, x_2\} \subset L$.
		\item (Closure property) If $S \in \Gamma$ has $|S| = k$, and $S' \subset S$ has $|S'| = k-1$, then $S' \in \Gamma$.
	\end{itemize} 
	Finally, suppose that  $d_{B}(\lambda^{1}(S),\lambda^{1}(\phi(S))) \leq \epsilon_1$ for all $S \in \Gamma$. Then $\phi_{X}$ is a $56(k+1)\epsilon_1 + 28\epsilon_2$ quasi-isometry.
	\label{thm:sparseinv}
\end{theorem}

\begin{remark}
Relatively few subsets of cardinality $k$ are needed to satisfy the $(X,L)$-covering property, as one subset is needed for every pair of points in $(X \setminus L)$, of which there are ${ |X \setminus L| \choose 2}$, and $S = L \cup \{x\}$ works to cover all pairs of the form $(l,x)$ for $l \in L$ and $x \in X$, adding $|X \setminus L|$ more subsets. Finally, to satisfy the closure property, we include all $(k-1)$-element subsets of these sets, which multiplies the total number of subsets by at most $(k+1)$.	
\end{remark}

\subsection{Probabilistic Results}
\label{sec:prob}
Theorems \ref{thm:invstab} and \ref{thm:sparseinv} and Corollary \ref{cor:densecover} tell us that we do not need to consider all ${|X| \choose k} + {|X| \choose k-1} + \cdots + {|X| \choose k-m-1}$ subsets $S \subseteq X$ of size $|S| \in \{k, \cdots, k-m-1\}$, so long as the collection $\Gamma$ of subsets considered satisfies appropriate cover and closure properties. This still leaves the question of how to produce such a collection $\Gamma$ in practice. Of the two conditions, the covering property is the more flexible, as the closure property explicitly requires the full downward closure of the appropriate cardinalities. The aim of this section is to show that a relatively small collection of randomly chosen size-$k$ subsets are likely to satisfy the covering property, and hence generate a collection $\Gamma$ that is both covering and closed. We will assume that $Z=X$ in the language of Theorem \ref{thm:invstab}, i.e. that we are randomly sampling from the space we wish to cover. All proofs can be found in Appendix \ref{sec:probproof}.\\

The following two propositions, with $p=2$, provide a lower bound on the probability that a random collection of $M$ subsets covers pairs in $X$.

%

\begin{proposition}
	Let $X$ be a set of size $n$, and choose $M$ subsets $\{S_1, \cdots, S_M\}$ of size $k$ by uniform sampling without replacement. Let $p \leq k$ and $A$ be the outcome that every set of $p$ points $(x_1, \cdots, x_p)$ is contained in at least one $S_i$. Then
	\[P(A) \geq 1 - {n \choose p}\left(1 - \left(\frac{k-p+1}{n-p+1}\right)^{p}\right)^{M}.\]
	\label{prop:probcover}
\end{proposition}

\begin{proposition}
	Let $A$ be as in the prior proposition. For any $\epsilon \in (0,1)$, if
	\[M \geq (p \log\left(\frac{ne}{p}\right) - \log(1- \epsilon))\left(\frac{n-p+1}{k-p+1}\right)^{p}\]
	
	then $P(A) \geq \epsilon$.
	\label{prop:probeps}
\end{proposition}

These bounds are further improved in the setting of Corollary \ref{cor:densecover}, when $\{S_1, \cdots, S_M\}$ need not cover all pairs of points in $X$, but all pairs of points in \emph{some} $\delta$-GH approximation $X'$ of $X$, as there are typically many such approximates with many fewer points than $X$.

\section{Applications}
\label{sec:applications}
Distributed persistence has myriad applications in machine learning and data analysis, in that it can be applied in many of the same settings as standard persistent homology. We list here a few noteworthy examples. 

\begin{itemize}
	\item (Dimensionality Reduction) Given a high-dimensional data set, the goal of dimensionality reduction is to embed it in lower-dimensional space while preserving its shape. We can force the embedding to preserve the topology of the data by computing a loss comparing the persistence diagrams of many random subsets in the high-dimensional space and in the embedding.
	
	\item (Shape Registration) Given two embedded point clouds $X$ and $Y$ modeling the same shape, it can be of interest to learn a map $f: X \to Y$ aligning corresponding points. Using distributed topology, we can ask for $f$ to preserve the persistence diagrams of many random small subsets of $X$.

	\item (Feature Extraction) Given a metric space $X$, we can compute the persistence diagrams of many random small subsets of $X$, and, throwing away the subset labelings, obtain a bag-of-persistence-diagrams feature. These features can then be used in machine learning applications.
\end{itemize}

\section{Experiments}
\label{sec:experiments}

The goal of the experiments below is to corroborate the theoretical results in this paper by demonstrating that a loss function built on distributed persistence alone, and sampled on a small number of random subsets, suffices to reconstruct simple metric spaces. Suppose $X$ and $Y$ are finite subsets of Euclidean spaces and $\phi: X \to Y$ is a surjection. Theorem \ref{thm:invstab} shows that we may test if $\phi$ is a quasi-isometry by evaluating $d_{B}(\lambda^{m}(S),\lambda^{m}(\phi(S)))$ for a certain collection of subsets $S \subseteq X$. If $X$ is fixed and $Y$ is variable, we can minimize $d_{B}(\lambda^{m}(S),\lambda^{m}(\phi(S)))$ thanks to the differentiability of persistence computations; this has the effect of bringing $Y$ closer in alignment with $X$. In the following two synthetic experiments, we follow the methodology described above for $X$ as (1) $100$ points evenly distributed on a circle in $\mathbb{R}^2$ and (2) $256$ points evenly distributed on a torus in $\mathbb{R}^3$. The codomain $Y$ is initialized to be $X$ with independent Gaussian noise added coordinate-wise.  Our aim is to see whether minimizing a distributed topological functional via gradient descent succeeds in correcting for the large geometric distortion of adding Gaussian noise. In both cases, every iteration step consists of uniformly sampling $k=25$ points, denoted $S$, from $X$ and taking a step (i.e. perturbing $Y$) to minimize the loss $W_2^2(D_0(S), D_0(\phi(S))) + W_2^2(D_1(S), D_1(\phi(S)))$, where $D_i$ is the degree $i$ persistence diagram of the Rips filtration. Because we are updating $Y$ based on only a single sample $S$, we use the Adam optimizer \cite{kingma2014adam} to benefit from momentum. The results of these two experiments can be found in Figure \ref{fig:experiment}, with the first row showing the circle experiment and the second row the torus experiment. For the first (resp. second) row, the first column shows the initial state of $Y$, and the following columns show $Y$ after successive multiples of $2^{11}$ (resp. $2^{15}$) iterations. For both experiments, we observe the codomain space $Y$ re-organizing itself to closely resemble $X$. The coloring of the points in Figure \ref{fig:experiment} denotes their labeling in $X$, so that points with similar colors are nearby in $X$. The fact that the color gradients in the final positions of $Y$ are largely continuous affirm that our optimization fixes not only the global geometry of $Y$, but also the labeled pairwise distances, and hence gives a space quasi-isometric to $X$. The code used to generate these experiments is available at \href{https://github.com/aywagner/DIPOLE}{https://github.com/aywagner/DIPOLE}.


\begin{figure}[htb!]
	\centering
	\includegraphics[width=1\textwidth]{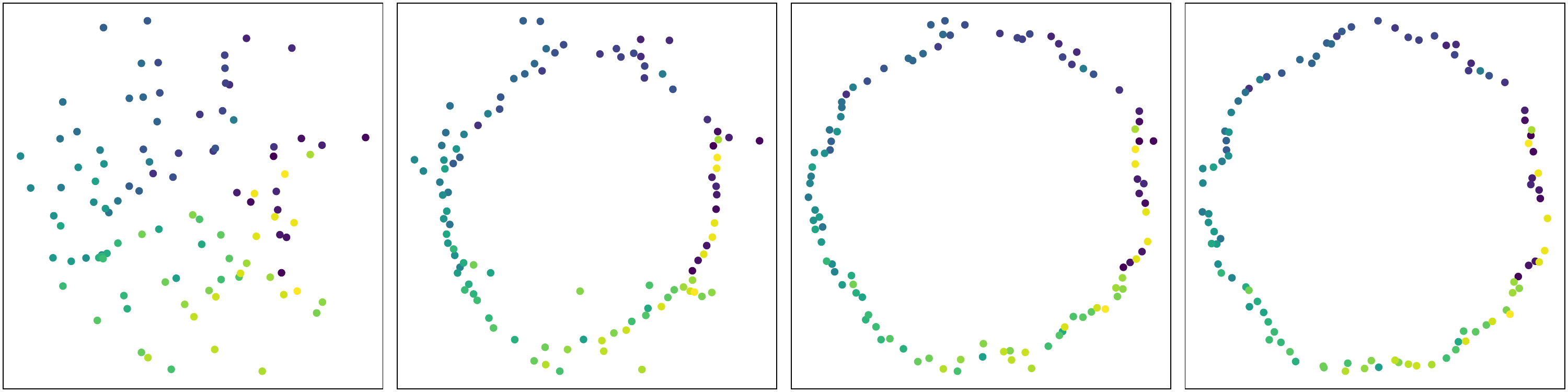}
	\includegraphics[width=1\textwidth]{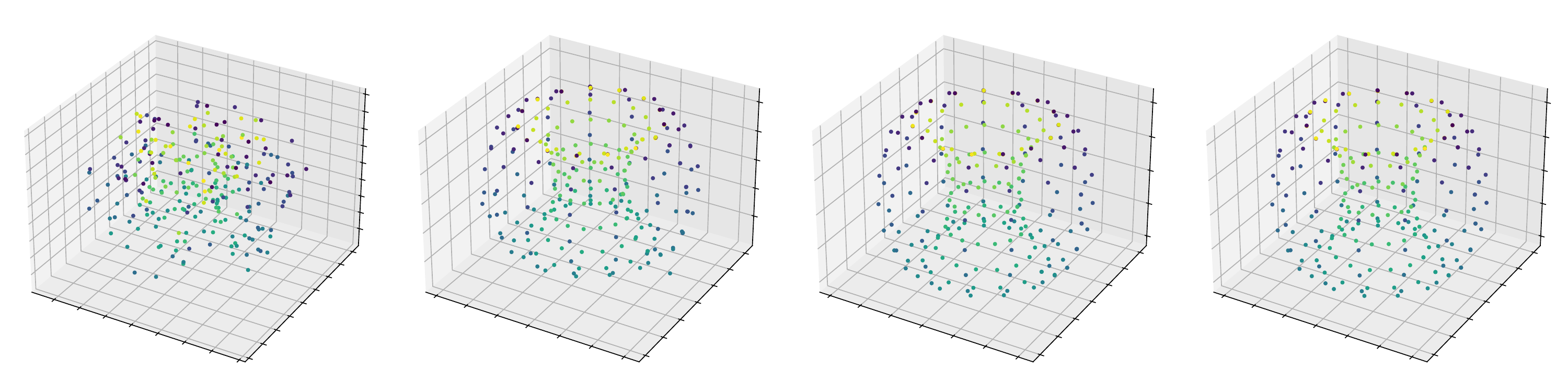}
	\caption{Synthetic optimization experiments. Columns correspond to initial, intermediate, and final positions of $Y$. Color denotes labeling.}
	\label{fig:experiment}
\end{figure}

These experiments are a proof-of-concept but can be developed into a full pipeline for dimensionality reduction. That line of investigation is beyond the scope of this paper, and was carried out by the authors in a separate paper, cf. \cite{wagner2021improving}. 
A key insight in \cite{wagner2021improving} is that adding a local metric term to the topological loss results in dramatically faster convergence to high-quality embeddings. 

\section{Conclusion}

It has long been understood that computational complexity and sensitivity to outliers are major challenges in the application of persistent homology in data analysis. Moreover, the lack of a stable inverse makes it hard to interpret which geometric information is retained in the persistence diagram, and which is forgotten. Multiple lines of research have sought to address these problems by constructing more sophisticated topological invariants and tools, such as the persistent homology transform, multiparameter persistence, distributed persistence calculations \cite{10.1145/3330345.3332147}, and discrete Morse theory. However, any gains in invertibility are compromised by sizeable increases in computational complexity.

The focus of this paper was the simplest scheme for speeding up persistence calculations: subsampling. Subsampling and bootstrapping are ubiquitous in machine learning and are already being applied in topological data analysis. What we have shown is that this simple approach also enjoys uniquely strong theoretical guarantees. In particular, the manner in which distributed persistence interpolates between geometry and topology is explicitly given by quadratic bounds. Moreover, these theoretical guarantees are complemented by the success that subsampling has seen in the TDA literature, and the robust synthetic experiments shown above.

There remain a number of outstanding problems, both theoretical and computational, that would complement the results of this paper and facilitate its practical application.

\begin{itemize}
	\item Distributed persistence, as we have defined it, consists of pairs of subsets and persistence diagrams. In many applications, we may wish to take only the persistence diagrams and forget the subset labels. What injectivity results can be obtained in this unstructured setting?
	\item Individual persistence diagrams can be challenging to work with, due to the fact that the space of diagrams admits no Hilbert space structure \cite{MR3968607, Bubenik:2020aa, 2019arXiv191013935W}, though there are a number of effective vectorizations in the literature. How can these be extended or adapted to provide vectorizations of sets of persistence diagrams coming from subsamples of a fixed point cloud? This is a more structured problem than working with arbitrary collections of persistence diagrams.
	\item If we are interested in recovering the global topology of Euclidean point clouds rather than their quasi-isometry or Gromov-Hausdorff type, it suffices to estimate pairwise distances between points in adjacent Voronoi cells, at least when working with the full Rips or \v{C}ech complex and not a skeleton. A careful analysis of this setting could dramatically decrease the Lipschitz constants appearing in Theorem \ref{thm:invstab}.
\end{itemize}

\bibliography{disttopbib}

\appendix

\section{Proof of Lemma \ref{lem:roundinglemma}}
\label{sec:roundinglemmaproof}
\begin{proof}
	The proof is a recursive construction. The first step is to add $p_1$ to $R$. We then repeat the following argument, iterating through $P$. Consider $p_n$, and let $r_*$ be the largest element of $R$ so far. If $p_n < r_* + 2\epsilon + 4\delta$, skip $p_n$. Otherwise, initialize $r_n = p_n$, and iterate over all $i < n$ and check that $p_i > (r_n + r_{*})/2$ iff $q_i > (r_n + r_{*})/2$. Every time an index $i$ is found for which this condition is violated, increment $r_n \leftarrow r_n + 2d_i$. The effect of this incrementation is to force both $q_i$ and $p_i$ to be strictly closer to $r_{*}$ than they are to $r_n$. This condition can be violated at most once for each $p_i$, hence the total sum of the incrementation is $2\delta$, at the end of which $r_n$ is added to $R$.\\
	
	Let us see why the resulting set $R$ satisfies (1) and (2). If $r_n$ was added to $R$, then it is at most $2\delta$ from $p_n$ and $2\delta + \epsilon$ from $q_n$, whereas $|r_{*} - p_n| > 2\epsilon + 4\delta$ and $|r_{*} - q_n| > \epsilon + 4\delta$ by the triangle inequality. Thus $\pi(q_n) = \pi(p_n) = r_n$. For $i<n$, the recursive incrementation ensures $\pi(p_i) = r_n$ if and only if $\pi(q_i) = r_n$, and otherwise the value of $\pi$ on $(p_i,q_i)$ is unchanged. Thus (1) is preserved. To check (2), note that if $\pi(p_i) = \pi(q_i) = r_n$ for $i < n$, then $p_i$ and $q_i$ are closer to $r_n$ than any other element in $R$. By recursive hypothesis, this distance is at most $3\epsilon + 4\delta$, so $|p_{i} - r_n|$ and $|q_{i} - r_{n}| \leq 3\epsilon + 4\delta$.\\
	
	If, on the other hand, no point was added to $R$, then $p_n < r_{*} + 2\epsilon + 4\delta$. Let $p_* \in P$ be the point corresponding to $r_*$. Since $r_{*} + 2\epsilon + 4\delta > p_{n} \geq p_{*} \geq r_{*} - 2\delta$, we know $|p_n - r_*| \leq 2\epsilon + 4\delta$ and $|q_n - r_*| \leq |q_n - p_n| + |p_n - r_*| \leq 3\epsilon + 4\delta$. If we can show that $\pi(p_n) = r_*$ and $\pi(q_n) = r_*$, the proof will be complete. If $p_n \geq r_*$ then it is clear that $\pi(p_n) = r_*$, and similarly, if $q_n \geq r_*$, we have $\pi(q_n) = r_*$. Thus we need to consider what happens if $p_n$ or $q_n$ are strictly less than $r_{*}$.\\
	
	Let $r_{**} < r_{*}$ be the penultimate point in $R$. Our goal is to show that $p_n$ or $q_n$ are strictly closer to $r_{*}$ than they are to $r_{**}$. Recall the point $p_{*} \in P$ corresponding to $r_{*}$. Since $p_{*} \leq p_n$ and $|r_* - p_*| \leq 2\delta$, we know that $p_{n} \geq r_* - 2\delta$ and $q_{n} \geq r_{*} - 2\delta - \epsilon$. Thus if $p_{n}$ or $q_n$ are strictly less than $r_{*}$, they are no further than $2\delta$ and $2\delta + \epsilon$ away, respectively. However, since $|r_{*} - r_{**}| \geq 2\epsilon + 4\delta$, the triangle inequality implies that $|p_{n} - r_{**}| \geq 2\epsilon + 2\delta$ and $|q_{n} - r_{**}| \geq \epsilon + 2\delta$. Thus, if $p_n$ or $q_n$ are smaller than $r_{*}$, they must still round up $r_{*}$ than $r_{**}$, and not $r_{**}$ or any other element of $R$.
\end{proof}

\begin{corollary}
	\label{cor:rounding}
	We can extend the set $R$ in the Rounding Lemma to a $14\delta$-dense subset $R' \subset \mathbb{R}$, without changing $\pi$ on $P \cup Q$. All that is necessary is to enrich $R$ by adding points in $(\cup_{r \in R} N(r,14\delta))^{C}$.
	\label{cor:roundingcor}
\end{corollary}

\section{Proof of Theorem \ref{thm:invstab}}
\label{sec:majorproof}

\begin{proof}
	Let $(x_1,x_2)$ be an edge in $X$, and let $(y_1,y_2)$ be the corresponding edge in $Y$, i.e. $y_1 = \phi(x_1)$ and $y_2 = \phi(x_2)$. Let $S \in \Gamma$ be a subset of size $k$ containing $(x_1,x_2)$. Let $A(S)$ be the set of appearance times of simplices in the $m$-skeleton of $S$, and define $A(\phi(S))$ similarly. Consider the following multiset of $2$-tuples: 
	\[C = \{(l,l+2\epsilon), (l,l-2\epsilon) \mid l \in A(S) \cup A(\phi(S))\},\]
	and let $P = \pi_{1}(C)$ and $Q = \pi_{2}(C)$ be the multi-sets obtained by projecting on to the first and second coordinates, respectively. By permuting the order of $P$ (and $Q$ correspondingly, to preserve the pairing), we may assume WLOG that $P$ is nondecreasing. In the notation of the Rounding Lemma, we have $\delta = \sum d_{i} = 4\epsilon|S(A)| + 4\epsilon|S(\phi(A))|$. Let $R$ be the subset given by the Rounding Lemma and Corollary \ref{cor:roundingcor}, and let  $\lambda^{R}$ denote the invariant $\lambda^{m}$ with filtration rounded to $R$. Note that if $S' \subset S$ has the property that $d_{B}(\lambda^{m}(S'),\lambda^{m}(\phi(S'))) \leq \epsilon$, then $\lambda^{R}(S') = \lambda^{R}(\phi(S'))$. To see why this is the case, let $p = (a,b) \in \lambda^{m}(S') \cup \Delta$ and $p' = (a',b') \in \lambda^{m}(\phi(S')) \cup \Delta$ be dots paired in an optimal Bottleneck matching, where $\Delta$ is the diagonal. We now show that $p$ and $p'$ are rounded to the same point.
	
	Let us first assume that $p$ is on the diagonal, so that $|b'-a'| \leq 2\epsilon$. If $p'$ is also on the diagonal, then both $p$ and $p'$ remain on the diagonal after rounding to $R$ (or, indeed, rounding to any set of values). If $p'$ is not on the diagonal, $a',b' \in A(\phi(S))$; since $|b'-a'| \leq 2\epsilon$, $a'$ are $b'$ are rounded to the same point in $R$, and hence the point $(a',b')$ is rounded to the diagonal. In either case, both $p$ and $p'$ are on the diagonal after rounding, and there is no cost in pairing them. If $p$ is not on the diagonal, then $a,b \in A(S)$, and since $a' \in [a-\epsilon,a+\epsilon]$ and $b' \in [b-\epsilon,b+\epsilon]$, we can conclude that $a$ and $a'$ round to the same point in $R$, and the same is true for $b$ and $b'$. In any case, the points $p$ and $p'$ become identical after rounding to $R$.
	
	Thus, using $\lambda^{R}$, $\phi$ preserves persistence diagrams of all subsets of $S$ of size $k$ through $k-m-1$. Let us write $\chi^{R}$ for the corresponding Euler curve invariant. Since the persistence diagram determines the Euler curve, we know that $\phi$ preserves $\chi^{R}$ for all subsets of $S$ of size $k$ through $k-m-1$.  We would like to show that $\phi$ preserves $\chi^{R}$ for all subsets of size $k-m-2$, and so $k-m-3$, etc., until one hits subsets of size two, as in Corollary \ref{cor:eulertrick}. To do so, we take a subset $P \subset X$ of size $k-m-2$, and augment it with $m+2$ additional points to obtain subsets $W$ and $W_i$, as in the proof of Lemma \ref{lem:euler}. Defining $R = K^{m}(W)$ and $T_i = K^{m}(W_i)$, Lemma \ref{lem:ietrick} provides for every Rips/\v{C}ech filtration parameter $r$:
	\begin{align*}
		(-1)^{n} \chi^{R}(K^{m}(P)(r))
		&= \sum_{i} \chi^{R}(T_i^{m}(r)) \\
		&- \sum_{i < j}\chi^{R}(T_i^{m}(r) \cap T_j^{m}(r)) \\
		&+ \sum_{i < j < k}\chi^{R}(T_i^{m}(r) \cap T_j^{m}(r) \cap T_k^{m}(r)) \\
		& \dots \\
		&- \chi^{R}(R^{m}(r)).
	\end{align*}
	Note that $\phi(R) = K^{m}(\phi(W))$ and $\phi(T_i) = K^{m}(\phi(W_i))$, and that if every simplex in $R^{m}$ lies in some $T_{i}^{m}$, then every simplex in $\phi(R)^{m}$ lies in some $\phi(T_i)^{m}$. Thus we can apply Lemma \ref{lem:ietrick} in $Y$, to deduce:
	\begin{align*}
		(-1)^{n} \chi^{R}(K^{m}(\phi(P))(r))
		&= \sum_{i} \chi^{R}(\phi(T_i)^{m}(r)) \\
		&- \sum_{i < j}\chi^{R}(\phi(T_i)^{m}(r) \cap \phi(T_j)^{m}(r)) \\
		&+ \sum_{i < j < k}\chi^{R}(\phi(T_i)^{m}(r) \cap \phi(T_j)^{m}(r) \cap \phi(T_k)^{m}(r)) \\
		& \dots \\
		&- \chi^{R}(\phi(R)^{m}(r)).
	\end{align*}
	
	Since $\phi$ preserves $\chi^{R}$ for all subsets of $S$ of size $k$ through $k-m-1$, the right-hand sides of these two equations are the same. Thus, their left sides must also be equal, so that $P$ and $\phi(P)$ have the same $m$-skeleton Euler curve $\chi^{R}$. Repeating this trick, we work our way down to subsets of size two, deducing that $\lambda^{R}((x_1,x_2)) = \lambda^{R}((y_1,y_2))$\footnote{Noting that for subsets of size two, Euler curves and persistence diagrams contain identical information.}. As $R$ is $(4 \times 14)\epsilon|S(A)| + (4 \times 14)\epsilon|S(\phi(A))|$ dense in $\mathbb{R}$, persistence stability implies that $\lambda^{m}$ and $\lambda^{R}$ are within $56\epsilon(|S(A)| + |S(\phi(A))|)$ of each other in Bottleneck distance. The triangle inequality then tells us that $d_{B}(\lambda^{m}(x_1,x_2),\lambda^{m}((y_1,y_2))) \leq 112\epsilon(|S(A)| + |S(\phi(A))|)$,  which is equivalent to $| \|x_1 - x_2\| - \|y_1 - y_2\|| \leq 112\epsilon(|S(A)| + |S(\phi(A))|)$. To conclude the proof, note that for the Rips complex, $|S(A)|, |S(\phi(A))| \leq {k \choose 2} = \frac{k^2 - k}{2} \leq \frac{k^2}{2}$, as all appearance times of simplices are just pairwise distances between points. For the \v{C}ech complex, there may be a total of $S(k,m)$ distinct appearance times in $S(A)$ or $S(\phi(A))$, one for each simplex of dimension between $1$ and $m$, that need to be rounded correctly (all dimension zero simplices necessarily appear at height zero).
	%
	%
\end{proof}

\section{Proof of Theorem \ref{thm:sparseinv}}
\label{sec:sparseproof}	

 Before proving our quasi-isometry bound, we need the following corollary of the Rounding Lemma.

\begin{corollary}
	\label{cor:roundingw1}
	Given $A_{1} \cdots A_{n}$ and $B_{1} \cdots B_{n}$ persistence diagrams, with $W^{1}(A_i,B_i) \leq \delta$, there exists a $28n\delta$-dense subset $R \subset \mathbb{R}$ such that rounding all the persistence diagrams to the grid $R \times R$ forces $\pi(A_i) = \pi(B_i)$ for all $i$.
\end{corollary}
\begin{proof}
	This is a straightforward application of the Rounding Lemma. We take the set $P$ to consist of all the birth and death times of all the dots in the $A_i$, and construct $Q$ from the $B_i$ similarly. As each $(A_i,B_i)$ pair contributes two sets of points, births and deaths, the total $\ell^{1}$ norm of pairing $P$ with $Q$ is $2 \times n \delta = 2n\delta$. By Corollary \ref{cor:rounding}, one can find a subset $R$ of density $28n\delta$ which ensures $\pi(p_i) = \pi(q_i)$ for all matched pairs $p_i \in P, q_i \in Q$, and hence $\pi(A_i) = \pi(B_i)$ for all $i$. 
\end{proof}

	\begin{proof}[Proof of Theorem \ref{thm:sparseinv}]
		Let $x_1,x_2$ be a pair of points in $X$. Without loss of generality, we can assume that at least one of these points is not in $L$, as the proof is otherwise trivial. Thus, we can extend $x_1,x_2$ to a subset $S$ of size $k$ by adding points in $L$. $S$ has $k$ subsets of size $(k-1)$. Corollary \ref{cor:roundingw1} tells us that we can find a $28(k+1)\epsilon_1$-dense subset $R \subset \mathbb{R}$ such that $\lambda^{R}(S) = \lambda^{R}(\phi(S))$, and $\lambda^{R}(S') = \lambda^{R}(\phi(S'))$ for any subset $S' \subset S$ with $|S| = (k-1)$. We can further demand from the Rounding Lemma and Corollary \ref{cor:rounding} that the appearance time of every edge in $L$ and its corresponding edge in $\phi(L)$ be exactly the same, where $R$ will now be $28(k+1)\epsilon_1 + 14\epsilon_2$ dense in $\mathbb{R}$. As in the proof of Theorem \ref{thm:invstab}, let $\chi^{R}$ be the Euler curve invariant corresponding to $\lambda^{R}$.\\
		
		The remainder of the proof is substantially similar to that of Theorem \ref{thm:invstab}, but in order to apply Lemma \ref{lem:ietrick} we must show that $\chi^{R}(S') = \chi^{R}(\phi(S'))$ for all $P \subset S$ of cardinality $k-2$ containing $x_1,x_2$. The analysis conducted in the proof of Theorem \ref{thm:invstab} shows that $\phi$ preserves $\chi^{R}$ for all subsets $S' \subseteq S$ of cardinality $k$ and $k-1$. We know that $S \setminus P$ consists of two points $z_1,z_2 \in L$. Let $T = K^{1}(S)$, $T_1 = K^{1}(P \cup \{z_1\})$, $T_2 = K^{1}(P \cup \{z_2\})$, and $W = T_1 \cap T_2 = K^{1}(P)$. Then $\phi(T) = K^{1}(\phi(S))$, $\phi(T_1) = K^{1}(\phi(P \cup \{z_1\}))$, $\phi(T_2) = K^{1}(\phi(P \cup \{z_2\}))$, and $\phi(W) = \phi(T_1) \cap \phi(T_2) = K^{1}(\phi(P))$. Finally, let $E$ be the edge $\{z_1,z_2\}$. The inclusion-exclusion principle of the Euler characteristic says that, for every filtration parameter $r$:
		\begin{align*}
			\chi^{R}(T(r))  &= \chi^{R}(T_1(r) \cup T_2(r) \cup E(r))\\ & = \chi^{R}(T_1(r)) + \chi^{R}(T_2(r)) + \chi^{R}(E(r)) - \chi^{R}(W(r))
		\end{align*} 
	
	\begin{align*}
	\chi^{R}(\phi(T)(r)) &= \chi^{R}(\phi(T_1)(r) \cup \phi(T_2)(r) \cup \phi(E)(r))\\ & = \chi^{R}(\phi(T_1(r))) + \chi^{R}(\phi(T_2(r))) + \chi^{R}(\phi(E(r))) - \chi^{R}(\phi(W(r)))	
	\end{align*}
	
	We know that $\phi$ preserves the Euler curves of $T,T_1,T_2$, and $E$, with the latter following from our deliberate choice to round the appearance times of $E$ and $\phi(E)$ to the same value. Thus the Euler curves for $W = K^{1}(P)$ and $\phi(W) =K^{1}(\phi(P))$ are the same. We can now apply Lemma \ref{lem:ietrick} and complete the proof as with Theorem \ref{thm:invstab}. 
\end{proof}

	\begin{remark}
		The above proof does not require all pairwise distances in $L$, as the inclusion-exclusion trick can be carried out with $O(k)$ intersections, rather than the full sublattice of $O(k^2)$ intersections. We have omitted this analysis as it obfuscates the statement of the theorem and does not significantly improve it.
	\end{remark}

\section{Proofs for Section \ref{sec:prob}}
\label{sec:probproof}

\subsection{Proof of Proposition \ref{prop:probcover}}

\begin{proof}
	\begin{align}
		P(A) & = 1 - P(\exists (x_1,\cdots, x_p) \mbox{ not in any } S_i)\\
		&\geq 1 - \sum_{(x_1,\cdots, x_p)} P((x_1,\cdots, x_p) \mbox{ not in any } S_i)\\
		& = 1 - {n \choose p}P((x_1,\cdots, x_p) \mbox{ not in any } S_i)\\
		& = 1 - {n \choose p}\prod_{i=1}^{M}P((x_1,\cdots, x_p) \mbox{ not in } S_i)\\
		& = 1 - {n \choose p}\prod_{i=1}^{M}(1 - P((x_1,\cdots, x_p) \subseteq S_i)).
	\end{align}
	
	An elementary counting argument provides:
	\[P((x_1,\cdots, x_p) \subseteq S_i) = \frac{{n-p \choose k-p }}{{n \choose k}}.\]
	
	Note further that:
	\[ \frac{{n-p \choose k-p }}{{n \choose k}} = \frac{k(k-1)(k-2)\cdots (k-p+1)}{n(n-1)(n-2)\cdots (n-p+1)} \geq \left(\frac{k-p+1}{n-p+1}\right)^{p}.\]
	
	Finally, observe that the effect of replacing $P((x_1,\cdots, x_p) \subseteq S_i)$ with $ \left(\frac{k-p+1}{n-p+1}\right)^{p}$ is to decrease the value of (5), and so the result is proved.
\end{proof}

\subsection{Proof of Proposition \ref{prop:probeps}}
\begin{proof}
	Our goal is to have:
	\[\epsilon \geq 1 - {n \choose p}\left(1 - \left(\frac{k-p+1}{n-p+1}\right)^{p}\right)^{M}\]
	
	which is equivalent to
	\[{n \choose p}\left(1 - \left(\frac{k-p+1}{n-p+1}\right)^{p}\right)^{M} \geq 1 - \epsilon .\]	
	
	Taking the log of both sides gives
	\[\log {n \choose p} + M \log \left(1 - \left(\frac{k-p+1}{n-p+1}\right)^{p}\right) \geq \log (1 - \epsilon) .\]
	
	Solving for $M$ gives:
	\begin{equation}
		M \geq \frac{\log(1 - \epsilon) - \log {n \choose p}}{\log \left(1 - \left(\frac{k-p+1}{n-p+1}\right)^{p}\right)}.
	\end{equation}
	
	The denominator on the right-hand side of (6) is negative, so using the identity ${n \choose p } < \left(\frac{ne}{p}\right)^p$, we can replace (6) with the strictly stronger inequality:
	\begin{equation}
		M \geq \frac{\log(1 - \epsilon) - p\log \frac{ne}{p}}{\log \left(1 - \left(\frac{k-p+1}{n-p+1}\right)^{p}\right)}.
	\end{equation}
	
	We can then apply the identity  $0 \geq -x \geq \log(1-x)$ for $x \in (0,1)$, and so replace (7) with the stronger inequality,
	\begin{equation}
		M \geq \frac{\log(1 - \epsilon) - p\log \frac{ne}{p}}{ - \left(\frac{k-p+1}{n-p+1}\right)^{p}}.
	\end{equation}
	
	The result then follows via simple algebra.
\end{proof}

\end{document}